\begin{document}

\title{Generalization of Pigeon Hole Bound}
\author{\IEEEauthorblockN{Toni Ernvall}\\
\IEEEauthorblockA{Department of Mathematics,\\
University of Turku
}
}

\maketitle

\newtheorem{definition}{Definition}[section]
\newtheorem{thm}{Theorem}[section]
\newtheorem{proposition}[thm]{Proposition}
\newtheorem{lemma}[thm]{Lemma}
\newtheorem{corollary}[thm]{Corollary}
\newtheorem{exam}{Example}[section]
\newtheorem{conj}{Conjecture}
\newtheorem{remark}{Remark}[section]

\newcommand{\La}{\mathbf{L}}
\newcommand{\h}{{\mathbf h}}
\newcommand{\Z}{{\mathbf Z}}
\newcommand{\R}{{\mathbf R}}
\newcommand{\C}{{\mathbf C}}
\newcommand{\D}{{\mathcal D}}
\newcommand{\F}{{\mathbf F}}
\newcommand{\HH}{{\mathbf H}}
\newcommand{\OO}{{\mathcal O}}
\newcommand{\G}{{\mathcal G}}
\newcommand{\A}{{\mathcal A}}
\newcommand{\B}{{\mathcal B}}
\newcommand{\I}{{\mathcal I}}
\newcommand{\E}{{\mathcal E}}
\newcommand{\PP}{{\mathcal P}}
\newcommand{\Q}{{\mathbf Q}}
\newcommand{\M}{{\mathcal M}}
\newcommand{\separ}{\,\vert\,}
\newcommand{\abs}[1]{\vert #1 \vert}

\begin{abstract}
H.-F. Lu, J. Lahtonen, R. Vehkalahti, and C. Hollanti introduced so called Pigeon Hole Bound for decay function of MIMO-MAC codes. Here we give a generalization for it.
\end{abstract}

\section{Decay function}

We consider here decay function of MIMO-MAC codes. Every user has some specific lattice $\textbf{L}_j \subseteq \M_{n \times k}, j=1, \ldots, U$ with $k \geq Un$. We assume that each user's lattice is of full rank $r=2kn$ so the lattice $\textbf{L}_j$ has an integral basis $B_{j,1}, \ldots, B_{j,r}$. Now the code associated with $j$th user is a restriction of lattice $\mathbf{L}_{j}$ such that
$$
\textbf{L}_{j} (N_{j}) = \{ \sum_{i=1}^{r} b_{i} B_{j,i} | b_{i} \in \Z, -N_{j} \leq b_{i} \leq N_{j} \}
$$
where $N_{j}$ is a given positive number.

Using these definitions the $U$-user mimo-mac code is $(\textbf{L}_{1} (N_{1}), \textbf{L}_{2} (N_{2}), \dots, \textbf{L}_{U} (N_{U}))$.

For this we define
$$
\mathfrak{D}(N_1, \dots, N_U) = \min_{X_{j} \in \textbf{L}_{j}(N_{j}) \setminus \{ 0 \} }  \det (MM^{\dag})
$$
where $M=M(X_{1}, \ldots, X_{U})$. For a special case $N_1=\dots=N_U=N$ we write
$$
\mathfrak{D}(N) = \mathfrak{D}(N_1=N, \dots, N_U=N).
$$

In the special case $k=Un$ we have
$$
\mathfrak{D}(N_1, \dots, N_U) = D(N_{1}, \ldots, N_{U})^2
$$
and especially
$$
\mathfrak{D}(N) = D(N)^2.
$$

\section{An upper bound using pigeon hole principle}

So called Pigeon Hole Bound for decay function of MIMO-MAC codes was introduced in \cite{remarks}. Here we give a generalization for it.

\begin{lemma}
\label{matriisitulo}
Let $\mathbf{c}_1, \mathbf{c}_2, \dots, \mathbf{c}_k, \mathbf{e}_1, \mathbf{e}_2, \dots, \mathbf{e}_{k-1} \in \C^{n}$, and $\mathbf{c}_i-\mathbf{e}_i \in L(\mathbf{c}_{i+1}, \mathbf{c}_{i+2}, \dots, \mathbf{c}_{k})$ for $i=1, \dots, k-1$. Write also
$$
A = \left(
         \begin{array}{c}
           \mathbf{c}_1          \\
           \mathbf{c}_2          \\
           \vdots       \\
           \mathbf{c}_{k-1}          \\
           \mathbf{c}_k          \\
         \end{array}
       \right)
$$
and
$$
B = \left(
         \begin{array}{c}
           \mathbf{e}_1          \\
           \mathbf{e}_2          \\
           \vdots       \\
           \mathbf{e}_{k-1}      \\
           \mathbf{c}_k          \\
         \end{array}
       \right).
$$
Then we have $\det(AA^{\dag})=\det(BB^{\dag})$.
\end{lemma}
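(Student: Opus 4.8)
The plan is to realize the passage from $A$ to $B$ as left multiplication by a single unipotent upper-triangular matrix, and then to exploit the fact that the Gram determinant $\det(AA^{\dag})$ is insensitive to such row operations.

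First I would use the hypothesis to write each modified row explicitly. Since $\mathbf{c}_i-\mathbf{e}_i \in L(\mathbf{c}_{i+1},\ldots,\mathbf{c}_k)$, there are scalars $t_{i,i+1},\ldots,t_{i,k} \in \C$ with
$$
\mathbf{e}_i = \mathbf{c}_i + \sum_{j=i+1}^{k} t_{ij}\,\mathbf{c}_j, \qquad i=1,\ldots,k-1,
$$
while the last row of $B$ is exactly $\mathbf{c}_k$. Collecting these relations, I define the $k \times k$ matrix $T=(t_{ij})$ by setting $t_{ii}=1$, $t_{ij}=0$ for $j<i$, and $t_{ij}$ equal to the coefficients above for $j>i$ (the last row of $T$ being the $k$th standard basis vector). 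Then $T$ is upper triangular with all diagonal entries equal to $1$, and by construction $B=TA$, viewing $\mathbf{c}_i,\mathbf{e}_i$ as the rows of the $k \times n$ matrices.

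Next I would compute the Gram determinant of $B$. Because $B=TA$ we have $BB^{\dag} = T(AA^{\dag})T^{\dag}$, so
$$
\det(BB^{\dag}) = \det(T)\,\det(AA^{\dag})\,\det(T^{\dag}) = \abs{\det(T)}^2\,\det(AA^{\dag}),
$$
using multiplicativity of $\det$ together with $\det(T^{\dag})=\overline{\det(T)}$. Since $T$ is triangular with unit diagonal, $\det(T)=1$, whence $\det(BB^{\dag})=\det(AA^{\dag})$, as claimed.

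The only point that needs care is the ordering: the expansion of $\mathbf{e}_i$ uses only the rows $\mathbf{c}_{i+1},\ldots,\mathbf{c}_k$ of $A$ and not the already-modified rows of $B$, which is exactly what makes $T$ upper triangular and forces $\det T = 1$. I expect this bookkeeping — confirming that one matrix $T$ of the stated triangular shape simultaneously produces every row of $B$ from $A$ — to be the main thing to verify; once it is in place, the determinant identity is immediate.
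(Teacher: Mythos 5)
Your proof is correct, and it takes a genuinely different and more uniform route than the paper. The paper argues by cases on the relative size of $k$ and $n$: for $k=n$ it performs the row replacements one at a time on $\det(A)$ directly, and for $k<n$ it first pads $A$ and $B$ with an orthogonal basis $\mathbf{v}_1,\dots,\mathbf{v}_{n-k}$ of $L(\mathbf{c}_1,\dots,\mathbf{c}_k)^{\bot}$ to reduce to the square case, then extracts the Gram determinants from the resulting block-diagonal structure. You instead package all the row operations into a single unipotent upper-triangular matrix $T$ with $B=TA$, so that $\det(BB^{\dag})=\abs{\det(T)}^{2}\det(AA^{\dag})=\det(AA^{\dag})$ in one step, with no case distinction and no auxiliary orthogonal vectors. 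Your bookkeeping is sound: each $\mathbf{e}_i$ is expressed in terms of the \emph{original} rows $\mathbf{c}_{i+1},\dots,\mathbf{c}_k$ only, which is exactly what makes $T$ upper triangular with unit diagonal, and the identity $BB^{\dag}=T(AA^{\dag})T^{\dag}$ is valid for any rectangular $A$. What your approach buys is brevity and generality (it covers $k>n$, $k=n$, and $k<n$ simultaneously); what the paper's approach buys is essentially nothing extra here, though its padding trick makes explicit the geometric fact that the Gram determinant measures squared volume and is unaffected by shearing along later rows.
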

\begin{proof}
If $k>n$ then $\det(AA^{\dag})=0=\det(BB^{\dag})$. If $k=n$ then $\det(A)$ is
$$
\left|
         \begin{array}{c}
           \mathbf{c}_1          \\
           \mathbf{c}_2          \\
           \vdots       \\
           \mathbf{c}_{k-1}          \\
           \mathbf{c}_k          \\
         \end{array}
       \right|
 = \left|
         \begin{array}{c}
           \mathbf{e}_1          \\
           \mathbf{c}_2          \\
           \vdots       \\
           \mathbf{c}_{k-1}          \\
           \mathbf{c}_k          \\
         \end{array}
       \right|
 = \left|
         \begin{array}{c}
           \mathbf{e}_1          \\
           \mathbf{e}_2          \\
           \vdots       \\
           \mathbf{c}_{k-1}          \\
           \mathbf{c}_k          \\
         \end{array}
       \right|
=\dots
 = \left|
         \begin{array}{c}
           \mathbf{e}_1          \\
           \mathbf{e}_2          \\
           \vdots       \\
           \mathbf{e}_{k-1}          \\
           \mathbf{c}_k          \\
         \end{array}
       \right|
$$
i.e. $\det(B)$ and hence $\det(AA^{\dag})=\det(BB^{\dag})$.

Assume $k<n$. Let $\mathbf{v}_1, \dots, \mathbf{v}_{n-k} \in \C^{n}$ be such that $\mathbf{v}_1 \in L(\mathbf{c}_1, \mathbf{c}_2, \dots, \mathbf{c}_k,)^{\bot}\setminus \{\mathbf{0}\}$, $\mathbf{v}_2 \in L(\mathbf{v}_1, \mathbf{c}_1, \mathbf{c}_2, \dots, \mathbf{c}_k)^{\bot}\setminus \{\mathbf{0}\}$, ..., $\mathbf{v}_{n-k} \in L(\mathbf{v}_1, \mathbf{v}_2, \dots \mathbf{v}_{n-k-1}, \mathbf{c}_1 , \mathbf{c}_2, \dots, \mathbf{c}_k)^{\bot}\setminus \{\mathbf{0}\}$. Now (as in the case $n=k$) we have
$$
\det(\left(
         \begin{array}{c}
           \mathbf{c}_1          \\
           \vdots       \\
           \mathbf{c}_{k-1}      \\
           \mathbf{c}_k          \\
           \mathbf{v}_1          \\
           \vdots       \\
           \mathbf{v}_{n-k}      \\
           \end{array}
       \right))
=
\det(\left(
         \begin{array}{c}
           \mathbf{e}_1     \\
           \vdots       \\
           \mathbf{e}_{k-1}          \\
           \mathbf{c}_k          \\
           \mathbf{v}_1          \\
           \vdots       \\
           \mathbf{v}_{n-k}      \\
           \end{array}
       \right))
$$
and hence
$$
\left|
         \begin{array}{cccccc}
           \mathbf{c}_1 \mathbf{c}_{1}^{*}          & \dots   & \mathbf{c}_1 \mathbf{c}_{k}^{*}     & \mathbf{c}_1 \mathbf{v}_{1}^{*}     & \dots & \mathbf{c}_1 \mathbf{v}_{n-k}^{*}       \\
           \vdots                 &         & \vdots            & \vdots            &       & \vdots  \\
           \mathbf{c}_k \mathbf{c}_{1}^{*}          & \dots   & \mathbf{c}_k \mathbf{c}_{k}^{*}     & \mathbf{c}_k \mathbf{v}_{1}^{*}     & \dots & \mathbf{c}_k \mathbf{v}_{n-k}^{*}  \\
           \mathbf{v}_1 \mathbf{c}_{1}^{*}          & \dots   & \mathbf{v}_1 \mathbf{c}_{k}^{*}     & \mathbf{v}_1 \mathbf{v}_{1}^{*}     & \dots & \mathbf{v}_1 \mathbf{v}_{n-k}^{*}  \\
           \vdots                 &         & \vdots            & \vdots            &       & \vdots           \\
           \mathbf{v}_{n-k} \mathbf{c}_{1}^{*}      & \dots   & \mathbf{v}_{n-k} \mathbf{c}_{k}^{*} & \mathbf{v}_{n-k} \mathbf{v}_{1}^{*} & \dots & \mathbf{v}_{n-k} \mathbf{v}_{n-k}^{*}  \\
           \end{array}
       \right|
$$
is equal than
$$
\left|
         \begin{array}{cccccc}
           \mathbf{e}_1 \mathbf{e}_{1}^{*}          & \dots   & \mathbf{e}_1 \mathbf{c}_{k}^{*}     & \mathbf{e}_1 \mathbf{v}_{1}^{*}     & \dots & \mathbf{e}_1 \mathbf{v}_{n-k}^{*}       \\
           \vdots                 &         & \vdots            & \vdots            &       & \vdots  \\
           \mathbf{c}_k \mathbf{e}_{1}^{*}          & \dots   & \mathbf{c}_k \mathbf{c}_{k}^{*}     & \mathbf{c}_k \mathbf{v}_{1}^{*}     & \dots & \mathbf{c}_k \mathbf{v}_{n-k}^{*}  \\
           \mathbf{v}_1 \mathbf{e}_{1}^{*}          & \dots   & \mathbf{v}_1 \mathbf{c}_{k}^{*}     & \mathbf{v}_1 \mathbf{v}_{1}^{*}     & \dots & \mathbf{v}_1 \mathbf{v}_{n-k}^{*}  \\
           \vdots                 &         & \vdots            & \vdots            &       & \vdots           \\
           \mathbf{v}_{n-k} \mathbf{c}_{1}^{*}      & \dots   & \mathbf{v}_{n-k} \mathbf{c}_{k}^{*} & \mathbf{v}_{n-k} \mathbf{v}_{1}^{*} & \dots & \mathbf{v}_{n-k} \mathbf{v}_{n-k}^{*}  \\
           \end{array}
       \right|.
$$
And since the way we chose $\mathbf{v}_1, \dots, \mathbf{v}_{n-k}$ this means that
$$
\left|
         \begin{array}{cccccc}
           \mathbf{c}_1 \mathbf{c}_{1}^{*}          & \dots       & \mathbf{c}_1 \mathbf{c}_{k}^{*}     & 0     & \dots & 0       \\
           \vdots                 &                      & \vdots            & \vdots            &       & \vdots  \\
           \mathbf{c}_k \mathbf{c}_{1}^{*}          & \dots       & \mathbf{c}_k \mathbf{c}_{k}^{*}     & 0     & \dots & 0  \\
           0          & \dots       & 0     & \mathbf{v}_1 \mathbf{v}_{1}^{*}     & \dots &0  \\
           \vdots                 &                     & \vdots            & \vdots            &       & \vdots           \\
           0      & \dots  & 0 & 0 & \dots & \mathbf{v}_{n-k} \mathbf{v}_{n-k}^{*}  \\
           \end{array}
       \right|
$$
is equal than
$$
\left|
         \begin{array}{cccccc}
           \mathbf{e}_1 \mathbf{e}_{1}^{*}          & \dots       & \mathbf{e}_1 \mathbf{c}_{k}^{*}     & 0     & \dots & 0       \\
           \vdots                 &                     & \vdots            & \vdots            &       & \vdots  \\
           \mathbf{c}_k \mathbf{e}_{1}^{*}          & \dots      & \mathbf{c}_k \mathbf{c}_{k}^{*}     & 0     & \dots & 0  \\
           0          & \dots  & 0         & \mathbf{v}_1 \mathbf{v}_{1}^{*}     & \dots & 0  \\
           \vdots                 &                  & \vdots            & \vdots            &       & \vdots           \\
           0      & \dots  & 0 & 0 & \dots & \mathbf{v}_{n-k} \mathbf{v}_{n-k}^{*}  \\
           \end{array}
       \right|
$$
because if we write $\mathbf{e}_i = \mathbf{c}_i - \mathbf{x}_i$ where $\mathbf{x}_i \in L(\mathbf{c}_{i+1}, \dots, \mathbf{c}_k)$ then $\mathbf{v}_j \mathbf{e}_{i}^{*} = \mathbf{v}_j (\mathbf{c}_i - \mathbf{x}_i)^{*} = \mathbf{v}_j \mathbf{c}_{i}^{*} - \mathbf{v}_j \mathbf{x}_{i}^{*} = 0 - 0 = 0$ for all $i=1, \dots, n-1$ and $j=1, \dots, n-k$.
This gives that
$$
|\mathbf{v}_1|^2 \dots |\mathbf{v}_{n-k}|^2 \det(AA^{\dag}) = |\mathbf{v}_1|^2 \dots |\mathbf{v}_{n-k}|^2 \det(BB^{\dag})
$$
and hence $ \det(AA^{\dag}) = \det(BB^{\dag})$.
\end{proof}

\begin{lemma}
\label{projection}
Let $V=\R^n$ be an $n$-dimensional vector space, $N$ a given positive integer, and let $\mathbf{c}_1, \mathbf{c}_2, \dots, \mathbf{c}_n \in V$ be a basis for $V$. Let also $U$ be a $k$-dimensional subspace of $V$ and $\pi : V \rightarrow U$ an orthogonal projection into $U$. We can choose a basis $\{ \pi(\mathbf{c}_{j_1}) , \pi(\mathbf{c}_{j_2}) , \dots , \pi(\mathbf{c}_{j_k}) \}$ for $U$ such that if $\mathbf{v} = a_1 \mathbf{c}_1 + a_2 \mathbf{c}_2 + \dots + a_n \mathbf{c}_n$ with $|a_i| \leq N$ for all $i$ then $\pi(\mathbf{v}) = b_1 \pi(\mathbf{c}_{j_1}) + b_2 \pi(\mathbf{c}_{j_2}) + \dots + b_k \pi(\mathbf{c}_{j_k})$ with $|b_i| \leq n^{2}N$ for all $i$.
\end{lemma}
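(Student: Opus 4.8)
The plan is to combine linearity of $\pi$ with a determinant-maximization argument to control the change-of-basis coefficients. First I would note that since $\pi$ is linear and surjective onto $U$, the $n$ vectors $\pi(\mathbf{c}_1), \dots, \pi(\mathbf{c}_n)$ span the $k$-dimensional space $U$, and $\pi(\mathbf{v}) = \sum_{i=1}^n a_i \pi(\mathbf{c}_i)$. To single out a good basis, fix an orthonormal basis of $U$ and let $\mathbf{w}_i \in \R^k$ be the coordinate vector of $\pi(\mathbf{c}_i)$. Among all $k$-element index sets $S = \{j_1, \dots, j_k\}$ for which $\mathbf{w}_{j_1}, \dots, \mathbf{w}_{j_k}$ are linearly independent (such an $S$ exists because the $\mathbf{w}_i$ span $\R^k$), I would choose one maximizing the absolute value $D = \abs{\det(\mathbf{w}_{j_1}, \dots, \mathbf{w}_{j_k})} > 0$, and take $\{ \pi(\mathbf{c}_{j_1}), \dots, \pi(\mathbf{c}_{j_k}) \}$ as the claimed basis.

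The key step is to bound the coefficients expressing the unchosen projected vectors in this basis. For each $i \notin S$ write $\pi(\mathbf{c}_i) = \sum_{l=1}^k \lambda_{i,l} \pi(\mathbf{c}_{j_l})$. By Cramer's rule, $\lambda_{i,l}$ is the ratio of $\det(\mathbf{w}_{j_1}, \dots, \mathbf{w}_{j_{l-1}}, \mathbf{w}_i, \mathbf{w}_{j_{l+1}}, \dots, \mathbf{w}_{j_k})$ to $\det(\mathbf{w}_{j_1}, \dots, \mathbf{w}_{j_k})$. The numerator is, up to sign, the determinant attached to the $k$-set obtained from $S$ by replacing $j_l$ with $i$; by the maximality of $D$ its absolute value is at most $D$, so $\abs{\lambda_{i,l}} \leq 1$. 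This is the heart of the argument.

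Finally I would substitute and collect coefficients. Splitting $\pi(\mathbf{v}) = \sum_i a_i \pi(\mathbf{c}_i)$ into chosen and unchosen indices gives $\pi(\mathbf{v}) = \sum_{l=1}^k b_l \pi(\mathbf{c}_{j_l})$ with $b_l = a_{j_l} + \sum_{i \notin S} a_i \lambda_{i,l}$. Using $\abs{a_i} \leq N$ and $\abs{\lambda_{i,l}} \leq 1$ together with the triangle inequality yields $\abs{b_l} \leq N + (n-k)N \leq nN \leq n^2 N$, which in fact beats the stated bound.

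I expect the main obstacle to be verifying the estimate $\abs{\lambda_{i,l}} \leq 1$ cleanly: the essential point is that maximizing the absolute determinant (equivalently the $k$-dimensional volume) of the chosen subfamily forces every Cramer ratio against it to have absolute value at most one, since each such ratio compares one admissible subfamily determinant with the maximal one. Everything else reduces to linearity of $\pi$ and the triangle inequality.
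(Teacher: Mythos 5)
Your proof is correct, but it takes a genuinely different route from the paper's. The paper proceeds by an iterative exchange: it starts from an arbitrary subfamily of the $\pi(\mathbf{c}_i)$ forming a basis of $U$, and whenever some remaining $\pi(\mathbf{c}_{k+m})$ has a coefficient of absolute value greater than $1$ in the current basis, it swaps that vector in and the one carrying the maximal coefficient out; it then has to track how the coefficients of the discarded vectors compound through the subsequent swaps, which is where the weaker constant $n^{2}$ comes from. Your argument replaces this local search by a single global extremal choice: take the $k$-subset maximizing the absolute determinant (the $k$-dimensional volume) of the projected vectors, and read off from Cramer's rule that every discarded $\pi(\mathbf{c}_i)$ has all coefficients of absolute value at most $1$ in that basis, since each Cramer numerator is, up to sign, itself an admissible subfamily determinant and hence at most the maximal one. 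The two approaches are closely related --- each swap in the paper's procedure multiplies the absolute determinant of the selected subfamily by the factor $\abs{d_j}>1$, so your maximal-volume family is precisely a configuration at which the paper's procedure would make no further exchanges --- but your version delivers the uniform bound $\abs{\lambda_{i,l}}\leq 1$ for all discarded vectors simultaneously, with no bookkeeping across iterations, and yields the sharper conclusion $\abs{b_l}\leq (n-k+1)N\leq nN$, which of course implies the stated bound $n^{2}N$.
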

\begin{proof}
Since $\{ \mathbf{c}_1, \mathbf{c}_2, \dots, \mathbf{c}_n \}$ is a basis for $V$ we can choose a basis for $U$ from the set $\{ \pi(\mathbf{c}_{1}) , \pi(\mathbf{c}_{2}) , \dots , \pi(\mathbf{c}_{n}) \}$. Without loss of generality we may assume that $\pi(\mathbf{c}_{1}) , \pi(\mathbf{c}_{2}) , \dots , \pi(\mathbf{c}_{k})$ are linearly independent. We say that they form a basis $K_1$.

Let $\pi(c_{k+1}) = d_1 \pi(\mathbf{c}_{1}) + d_2 \pi(\mathbf{c}_{2}) + \dots + d_k \pi(\mathbf{c}_{k})$ for some $d_1, d_2, \dots, d_k \in \R$.

If $|d_i| \leq 1$ for all $i=1,2, \dots, k$ then let $K_2 = K_1$.

Otherwise let $|d_j| > 1$ be a maximal coefficient. Now $\pi(\mathbf{c}_{j}) = \frac{-d_1}{d_j} \pi(\mathbf{c}_{1}) + \frac{-d_2}{d_j} \pi(\mathbf{c}_{2}) + \dots + \frac{-d_{j-1}}{d_j} \pi(\mathbf{c}_{j-1}) + \frac{d_{k+1}}{d_j} \pi(\mathbf{c}_{k+1}) + \frac{-d_{j+1}}{d_j} \pi(\mathbf{c}_{j+1}) + \dots + \frac{-d_{k+1}}{d_j} \pi(\mathbf{c}_{k})$ and the absolute values of coefficients on the right hand side are smaller or equal that one. In this case let $K_2 = (K_1 \setminus {\pi(\mathbf{c}_j)}) \cup \{ \pi(\mathbf{c}_{k+1}) \}$. It is clear that $K_2$ is a basis for $U$.

Now similarly form a new basis $K_{i+1}$ using basis $K_i$ for all $i=1,2, \dots, n-k$ and write $K=K_{n-k+1}=\{ \pi(\mathbf{c}_{j_1}) , \pi(\mathbf{c}_{j_2}) , \dots , \pi(\mathbf{c}_{j_k}) \}$.

Now $\pi(\mathbf{c}_l) = d_{l,1} \pi(\mathbf{c}_{j_1}) + d_{l,2} \pi(\mathbf{c}_{j_2}) + \dots + d_{l,k} \pi(\mathbf{c}_{j_k})$ where $|d_{l,i}| \leq n$ for all $l$ and $i$ because every time when we took some $\pi(\mathbf{c}_h)$ off from the basis it then had a such representation in the new basis that all the coordinates were absolutely smaller or equal than zero. Repeating this procedure at most $n-k \leq n$ times and using triangle inequality gives then the property. Hence if $\mathbf{v} = a_1 \mathbf{c}_1 + a_2 \mathbf{c}_2 + \dots + a_n \mathbf{c}_n$ with $|a_i| \leq N$ then $\pi(\mathbf{v}) = b_1 \pi(\mathbf{c}_{j_1}) + b_2 \pi(\mathbf{c}_{j_2}) + \dots + b_k \pi(\mathbf{c}_{j_k})$ with $|b_i| \leq n^{2}N$ for all $i$.
\end{proof}

\begin{thm}
For a MIMO-MAC lattice code $(\textbf{L}_{1} (N_{1}), \textbf{L}_{2} (N_{2}), \dots, \textbf{L}_{U} (N_{U}))$ of $U$ users, each transmitting with $n$ transmission antennas, having a code of length $k \geq Un$, and each users lattice is of full rank $r=2kn$ we have a constant $K$ such that
$$
\mathfrak{D}(N_1, \dots, N_U) \leq K \prod_{l=1}^{U-1} N_{l}^{-\frac{2n^2 (U-l)}{k-n(U-l)}}
$$
and especially
$$
\mathfrak{D}(N) \leq \frac{K}{N^{\alpha}}
$$
where $\alpha=\sum_{l=1}^{U-1} \frac{2n^2 (U-l)}{k-n(U-l)}$.
Especially if $k=Un$ we have
$$
\mathfrak{D}(N_1, \dots, N_U) \leq K \prod_{l=1}^{U-1} N_{l}^{-\frac{2n (U-l)}{l}}
$$
and
$$
\mathfrak{D}(N) \leq \frac{K}{N^{\beta}}
$$
where $\beta=\sum_{l=1}^{U-1} \frac{2n (U-l)}{l}$.
\end{thm}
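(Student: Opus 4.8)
The plan is to write $\det(MM^{\dag})$, for the stacked $Un\times k$ matrix $M=M(X_1,\dots,X_U)$, as a product of one ``volume per user'' and then to make every user's volume except the last one tiny by a pigeon hole argument. Since $Un\le k$, the rows of $M$ are $Un$ vectors in $\C^{k}$; denote the rows of $X_j$ by $\mathbf r_{j,1},\dots,\mathbf r_{j,n}$, order the rows of $M$ with those of user $U$ first and those of user $1$ last, and for $j<U$ let $S_j\subseteq\C^{k}$ be the span of the rows of users $j+1,\dots,U$ and $\pi_j$ the orthogonal projection onto $S_j^{\bot}$. Subtracting from each row of user $j<U$ its $S_j$-component (which lies in the span of the subsequent rows) leaves $\det(MM^{\dag})$ unchanged by repeated use of Lemma \ref{matriisitulo}; calling $M'$ the resulting matrix and applying Hadamard's inequality gives
$$
\det(MM^{\dag})=\det(M'M'^{\dag})\le\prod_{i=1}^{n}\abs{\mathbf r_{U,i}}^{2}\prod_{l=1}^{U-1}\prod_{i=1}^{n}\abs{\pi_l(\mathbf r_{l,i})}^{2},
$$
where the first product is a constant once $X_U$ is fixed. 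So it suffices to construct, for each $l<U$, a nonzero codeword whose projected rows $\pi_l(\mathbf r_{l,i})$ are very short.

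I would build the codewords from $l=U$ downwards. Fix $X_U\in\mathbf{L}_U(N_U)\setminus\{0\}$ of full row rank (possible since $\mathbf{L}_U$ spans $\M_{n\times k}(\C)$), and suppose inductively that $X_{l+1},\dots,X_U$ have been chosen with $\dim_{\C}S_l=n(U-l)$, so that $\dim_{\C}S_l^{\bot}=k-n(U-l)$. The map $\Phi_l$ sending a matrix $X$ with rows $\mathbf r_1,\dots,\mathbf r_n$ to $(\pi_l(\mathbf r_1),\dots,\pi_l(\mathbf r_n))$ is the orthogonal projection of the real space $\M_{n\times k}(\C)\cong\R^{r}$, $r=2kn$, onto a subspace of real dimension $D_l=2n\bigl(k-n(U-l)\bigr)$. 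Applying Lemma \ref{projection} to the integral basis $B_{l,1},\dots,B_{l,r}$ and to $\Phi_l$, the $\Phi_l$-images of all $\ge N_l^{\,r}$ points of $\mathbf{L}_l(N_l/2)$ lie in a box of half-side $\le r^{2}N_l$ in $D_l$ real coordinates. Dividing each coordinate into slightly fewer than $N_l^{\,r/D_l}$ parts produces fewer than $N_l^{\,r}$ sub-boxes, so two images share one; the difference of the two codewords is a nonzero $X_l\in\mathbf{L}_l(N_l)$ whose every coordinate is $\lesssim r^{2}N_l^{\,1-r/D_l}$, whence $\abs{\pi_l(\mathbf r_{l,i})}\le C_l\,N_l^{\,1-r/D_l}$ for a constant $C_l$ and all $i$ (using that the fixed vectors $\Phi_l(B_{l,j_s})$ have bounded norm).

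Since $r/D_l=k/\bigl(k-n(U-l)\bigr)$, one gets $1-r/D_l=-n(U-l)/\bigl(k-n(U-l)\bigr)$, so the $l$-th inner product in the first display is at most $C_l^{2n}\,N_l^{-2n^{2}(U-l)/(k-n(U-l))}$. Multiplying over $l=1,\dots,U-1$ and absorbing $\prod_i\abs{\mathbf r_{U,i}}^{2}$ and the $C_l^{2n}$ into one constant $K$ gives $\mathfrak{D}(N_1,\dots,N_U)\le K\prod_{l=1}^{U-1}N_l^{-2n^{2}(U-l)/(k-n(U-l))}$. Putting every $N_l=N$ yields $\mathfrak{D}(N)\le K/N^{\alpha}$, and substituting $k=Un$ turns the exponent into $2n(U-l)/l$, which gives the two displayed special-case formulas.

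The step needing the most care is keeping the pigeon hole exponent correct, i.e.\ maintaining $\dim_{\C}S_l=n(U-l)$ down the induction so that $D_l=2n(k-n(U-l))$ and not larger (a larger $D_l$ would weaken the bound). I would dispose of this by a dichotomy: when $X_{l+1}$ is produced, either its projected rows $\pi_{l+1}(\mathbf r_{l+1,i})$ are linearly independent, so $\dim S_l=\dim S_{l+1}+n=n(U-l)$ and the induction proceeds, or they are dependent, in which case the corresponding volume already vanishes, $\det(MM^{\dag})=0$, and the asserted bound holds trivially once the remaining users are filled in with arbitrary nonzero codewords. The remaining loose ends—the factor $2$ in using $\mathbf{L}_l(N_l/2)$ so that differences land in $\mathbf{L}_l(N_l)$, the passage from per-coordinate bounds to the Euclidean lengths $\abs{\pi_l(\mathbf r_{l,i})}$, and the rounding in the count of sub-boxes—change only the constant $K$ and never the exponents.
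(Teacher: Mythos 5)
Your proposal follows essentially the same route as the paper: fix a bounded codeword for user $U$, then work downward through the users, using Lemma \ref{projection} together with the pigeon hole principle to find a nonzero codeword whose projection onto the orthogonal complement of the span of the previously chosen rows has size $O\bigl(N_l^{-n(U-l)/(k-n(U-l))}\bigr)$, and finally invoke Lemma \ref{matriisitulo} to replace each user's rows by their projections without changing $\det(MM^{\dag})$ and bound the resulting Gram determinant by the product of squared row norms. You are somewhat more explicit than the paper about Hadamard's inequality, the difference-of-two-codewords step, and maintaining $\dim S_l=n(U-l)$, which is welcome; the only nit is that with user $U$'s rows placed \emph{first} your subtracted components lie in the span of the preceding rather than subsequent rows, so you should reverse the row order (as the paper does) to match the hypothesis of Lemma \ref{matriisitulo} literally.
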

\begin{proof}
Let us use the notation $C_l=(\mathbf{c}_{l,1}^{\top} , \dots , \mathbf{c}_{l,n}^{\top})^{\top}$ for $l=1, \dots, U$.

Let us first fix some small $C_U \in \mathbf{L}_{U}(N_U)$. Now $|C_U|=\OO(1)$. Then write $W_U = \{ (\mathbf{x}_{1}^{\top} , \dots , \mathbf{x}_{n}^{\top})^{\top} | \mathbf{x}_i \in L(\mathbf{c}_{U,1} , \dots , \mathbf{c}_{U,n}) \}$. Then let $V_U = W_{U}^{\bot}$ be its orthogonal complement and $\pi_{U}: \mathcal{M}_{n \times k}(\C) \rightarrow V_U$ an orthogonal projection.

A subspace $V_U$ has $\dim_{\R}(V_U)=2nk-\dim_{\R}(W_U)=2nk-2n^2=2n(k-n)$ so the image $\pi_{U}(\mathbf{L}_{U-1}(N_{U-1}))$ falls into a $2n(n-k)$-dimensional hypercube with side length smaller or equal than $(2nk)^{2} N_{U-1}=\OO(N_{U-1})$ by lemma \ref{projection} with coordinates having restricted length since projection can only shrink. We also have $|\mathbf{L}_{U-1}(N_{U-1})|=\theta(N_{U-1}^{2nk})$ so using the linearity of $\pi_{U}$ and pigeon hole principle we have some $C_{U-1} \in \mathbf{L}_{U-1}(N_{U-1})$ such that
$$
\pi_{U}(C_{U-1})=\OO(\sqrt[2n(k-n)]{\frac{N_{U-1}^{2n(k-n)}}{N_{U-1}^{2nk}}})=\OO(N_{U-1}^{-\frac{n}{k-n}}).
$$

Now similarly build $V_{U-l}=W_{U-l}^{\bot}$ for $l=0, \dots, U-2$ by setting $W_{U-l} = \{ (\mathbf{x}_{1}^{\top} , \dots , \mathbf{x}_{n}^{\top})^{\top} | \mathbf{x}_i \in L(\mathbf{c}_{U,1} , \dots , \mathbf{c}_{U,n} , \mathbf{c}_{U-1,1} , \dots , \mathbf{c}_{U-1,n} , \dots , \mathbf{c}_{U-l,1} , \dots , \mathbf{c}_{U-l,n}) \}$. This gives $\dim_{\R}(V_{U-l})=2nk-\dim_{\R}(W_{U-l})=2nk-2n^2(l+1)=2n(k-nl-n)$. And again we find $C_{U-l-1}$ such that
$$
\pi_{U-l}(C_{U-l-1})=\OO(\sqrt[2n(k-nl-n)]{\frac{N_{U-l-1}^{2n(k-nl-n)}}{N_{U-l-1}^{2nk}}})=\OO(N_{U-l-1}^{-\frac{nl+n}{k-nl-n}}).
$$

Lemma \ref{matriisitulo} gives that if $A=(C_{1}^{\top} , \dots , C_{U}^{\top})^{\top}$ and $B=(\pi_{2}(C_{1})^{\top} , \dots , \pi_{U}(C_{U-1})^{\top}, C_{U}^{\top})^{\top}$ then $\det(AA^{\dag})=\det(BB^{\dag})$ that is of size
$$
\OO((\prod_{l=0}^{U-2} N_{U-l-1}^{-\frac{nl+n}{k-nl-n}} )^{2n}) = \OO(\prod_{l=1}^{U-1} N_{l}^{-\frac{2n^2 (U-l)}{k-n(U-l)}} ).
$$

\end{proof}


\end{document}